\def\NZQ{\mathbb}               
\def\QQ{{\NZQ Q}}
\def\ZZ{{\NZQ Z}}
\def\RR{{\NZQ R}}
\def\frk{\mathfrak}               
\def\Phi{{\frk N}}
\def\opn#1#2{\def#1{\operatorname{#2}}} 
\opn\chara{char} \opn\length{\ell} \opn\pd{pd} \opn\rk{rk}
\opn\projdim{proj\,dim} \opn\injdim{inj\,dim} \opn\rank{rank}
\opn\depth{depth} \opn\grade{grade} \opn\height{height}
\opn\embdim{emb\,dim} \opn\codim{codim}
\opn\Tr{Tr} \opn\bigrank{big\,rank}
\opn\superheight{superheight}\opn\lcm{lcm}
\opn\trdeg{tr\,deg}
\opn\reg{reg} \opn\lreg{lreg} \opn\ini{in} \opn\lpd{lpd}
\opn\size{size}\opn{\mult}{mult}
\opn\Div{Div} \opn\cl{cl} \opn\Cl{Cl}
\opn\Spec{Spec} \opn\Supp{Supp} \opn\supp{supp} \opn\Sing{Sing}
\opn\Ass{Ass} \opn\Min{Min}
\opn\Ann{Ann} \opn\Rad{Rad} \opn\Soc{Soc}
\opn\Syz{Syz} \opn\Im{Im} \opn\Ker{Ker} \opn\Coker{Coker}
\opn\Am{Am} \opn\Hom{Hom} \opn\Tor{Tor} \opn\Ext{Ext}
\opn\End{End} \opn\Aut{Aut} \opn\id{id}
\opn\nat{nat}
\opn\pff{pf}
\opn\Pf{Pf} \opn\GL{GL} \opn\SL{SL} \opn\mod{mod} \opn\ord{ord}
\opn\Gin{Gin}
\opn\Hilb{Hilb}\opn\adeg{adeg}\opn\std{std}\opn\ip{infpt}
\opn\Pol{Pol}
\opn\sat{sat}
\opn\Var{Var}
\opn\Gen{Gen}
\opn\vol{vol}
\opn\bx{Box}
\opn\aff{aff} \opn\con{conv} \opn\relint{relint} \opn\st{st}
\opn\lk{lk} \opn\cn{cn} \opn\core{core} \opn\vol{vol}
\opn\link{link} \opn\star{star}
\opn\gr{gr}
\def\Fc{{\mathcal F}}
\def\Pc{{\mathcal P}}
\def\pot#1#2{#1[\kern-0.28ex[#2]\kern-0.28ex]}
\opn\dirlim{\underrightarrow{\lim}}
\opn\inivlim{\underleftarrow{\lim}}
\def\Implies{\ifmmode\Longrightarrow \else
        \unskip${}\Longrightarrow{}$\ignorespaces\fi}
\def\implies{\ifmmode\Rightarrow \else
        \unskip${}\Rightarrow{}$\ignorespaces\fi}
\def\iff{\ifmmode\Longleftrightarrow \else
        \unskip${}\Longleftrightarrow{}$\ignorespaces\fi}
\newtheorem{Theorem}{Theorem}[section]
\newtheorem{Lemma}[Theorem]{Lemma}
\newtheorem{Proposition}[Theorem]{Proposition}
\newtheorem{Remark}[Theorem]{Remark}
\newtheorem{Example}[Theorem]{Example}
\let\epsilon\varepsilon
\let\phi=\varphi
\let\kappa=\varkappa
\opn\dis{dis}
\def\pnt{{\raise0.5mm\hbox{\large\bf.}}}
\opn\Lex{Lex}
\begin{document}

\title{
Ehrhart polynomials of integral simplices \\
with prime volumes
}

\author{Akihiro Higashitani}
\thanks{
{\bf 2010 Mathematics Subject Classification:}
Primary 52B20; Secondary 52B12. \\
\;\;\;\; {\bf Keywords:}
Integral simplex, Ehrhart polynomial, $\delta$-vector, 
Cauchy--Davenport theorem
\\ \;\;\;\; 
The author is supported by JSPS Research Fellowship for Young Scientists. 
}
\address{Akihiro Higashitani, 
Department of Pure and Applied Mathematics,
Graduate School of Information Science and Technology,
Osaka University,
Toyonaka, Osaka 560-0043, Japan}
\email{a-higashitani@cr.math.sci.osaka-u.ac.jp}

\begin{abstract}
For an integral convex polytope $\Pc \subset \RR^N$ of dimension $d$, we call 
$\delta(\Pc)=(\delta_0, \delta_1, \ldots, \delta_d)$ the $\delta$-vector of $\Pc$ 
and $\vol(\Pc)=\sum_{i=0}^d\delta_i$ its normalized volume. In this paper, 
we will establish the new equalities and inequalities on $\delta$-vectors 
for integral simplices whose normalized volumes are prime. 
Moreover, by using those, we will classify all the possible $\delta$-vectors 
of integral simplices with normalized volume 5 and 7. 
\end{abstract}

\maketitle

\section*{Introduction}

One of the most fascinating problems on enumerative combinatorics 
is to characterize the $\delta$-vectors of integral convex polytopes. 

Let $\Pc \subset \RR^N$ be an {\em integral} convex polytope of dimension $d$, 
which is a convex polytope any of whose vertices has integer coordinates. 
Let $\partial \Pc$ denote the boundary of $\Pc$. Given a positive integer $n$, we define 
$$i(\Pc,n) = |n\Pc \cap \ZZ^N|, \;\;\;\;\; 
i^*(\Pc,n)=|n (\Pc \setminus \partial \Pc) \cap \ZZ^N|,$$ 
where $n\Pc = \{ n\alpha : \alpha \in \Pc \}$ 
and $|X|$ is the cardinality of a finite set $X$. 
The enumerative function $i(\Pc,n)$ has the following fundamental properties, 
which were studied originally in the work of Ehrhart \cite{Ehrhart}: 
\begin{itemize}
\item $i(\Pc,n)$ is a polynomial in $n$ of degree $d$; 
\item $i(\Pc,0) = 1$; 
\item (loi de r\'eciprocit\'e) 
$i^*(\Pc,n)=( - 1 )^d i(\Pc, - n)$ for every integer $n > 0$. 
\end{itemize}
This polynomial $i(\Pc,n)$ is called the {\em Ehrhart polynomial} of $\Pc$. 
We refer the reader to \cite[Chapter 3]{BeckRobins}, \cite[Part II]{HibiRedBook} 
or \cite[pp. 235--241]{StanleyEC} 
for the introduction to the theory of Ehrhart polynomials. 

We define the sequence 
$\delta_0, \delta_1, \delta_2, \ldots $ of integers by the formula 
\begin{eqnarray}\label{delta}
(1 - \lambda)^{d + 1}  \sum_{n=0}^{\infty} i(\Pc,n) \lambda^n 
= \sum_{i=0}^{\infty} \delta_i \lambda^i. 
\end{eqnarray}
Then, from a fundamental result on generating functions (\cite[Corollary 4.3.1]{StanleyEC}), 
we know that $\delta_i = 0$ for $i > d$. We call the integer sequence 
$$\delta(\Pc)= (\delta_0, \delta_1, \ldots, \delta_d),$$ 
which appears in \eqref{delta}, the {\em $\delta$-vector} of $\Pc$. 

The $\delta$-vector has the following properties: 
\begin{itemize}
\item $\delta_0=1$, $\delta_1 = |\Pc \cap \ZZ^N| - (d + 1)$ and 
$\delta_d = |(\Pc \setminus \partial \Pc) \cap \ZZ^N|.$ 
Hence, $\delta_1 \geq \delta_d$. In particular, when $\delta_1=\delta_d$, 
$\Pc$ must be a simplex. 
\item Each $\delta_i$ is nonnegative (\cite{StanleyDRCP}). 
\item If $(\Pc \setminus \partial \Pc) \cap \ZZ^N$ is nonempty, 
then one has $\delta_1 \leq \delta_i$ for every $1 \leq i \leq d - 1$ (\cite{HibiLBT}). 
\item The leading coefficient $(\sum_{i=0}^d\delta_i)/d!$ of $i(\Pc,n)$ 
is equal to the usual volume of $\Pc$ (\cite[Proposition 4.6.30]{StanleyEC}). 
In particular, the positive integer $\vol(\Pc) = \sum_{i=0}^d\delta_i$ 
is said to be the {\em normalized volume} of $\Pc$. 
\end{itemize}

Recently, the $\delta$-vectors of integral convex polytopes 
have been studied intensively. 
For example, see \cite{HigashiSS, Staple1, Staple2}.


There are two well-known inequalities on $\delta$-vectors. 
Let $s = \max\{ i : \delta_i \neq 0 \}$. One is 
\begin{eqnarray}\label{Stanley}
\delta_0 + \delta_1 + \cdots + \delta_i 
\leq \delta_s + \delta_{s-1} + \cdots + \delta_{s-i}, 
\;\;\;\;\; 0 \leq i \leq \lfloor s/2 \rfloor, 
\end{eqnarray}
which is proved by Stanley \cite{StanleyJPAA}, and another one is 
\begin{eqnarray}\label{Hibi}
\delta_d + \delta_{d-1} + \cdots + \delta_{d-i} 
\leq \delta_1 + \delta_2 + \cdots + \delta_i + \delta_{i+1}, 
\;\;\;\;\; 0 \leq i \leq \lfloor (d-1)/2 \rfloor, 
\end{eqnarray}
which appears in the work of Hibi \cite[Remark (1.4)]{HibiLBT}. 

On the classification problem on $\delta$-vectors of integral convex polytopes, 
the above inequalities \eqref{Stanley} and \eqref{Hibi} 
characterize the possible $\delta$-vectors completely 
when $\sum_{i=0}^d\delta_i \leq 3$ (\cite[Theorem 0.1]{HHN}). 
Moreover, when $\sum_{i=0}^d\delta_i = 4$, 
the possible $\delta$-vectors are determined completely 
by \eqref{Stanley} and \eqref{Hibi} together with an additional condition (\cite[Theorem 5.1]{HHNan}). 
Furthermore, by the proofs of \cite[Theorem 0.1]{HHN} and \cite[Theorem 5.1]{HHNan}, 
we know that all the possible $\delta$-vectors can be realized 
as the $\delta$-vectors of integral simplices when $\sum_{i=0}^d \delta_i \leq 4$. 
However, unfortunately, this is no longer true when $\sum_{i=0}^d \delta_i = 5$. 
(See \cite[Remark 5.2]{HHNan}.) 
Hence, for the further classifications of $\delta$-vectors with $\sum_{i=0}^d\delta_i \geq 5$, 
it is natural to study $\delta$-vectors of integral simplices at first. 
Even for non-simplex cases, 
since every convex polytope can be triangulated into finitely many simplices 
and we can compute the $\delta$-vecotor of an integral convex polytope from its triangulation, 
investigating $\delta$-vectors of integral simplices is an essential and important work.

In this paper, in particular, we establish some new constraints 
on $\delta$-vectors for integral simplices whose normalized volumes are prime numbers. 
The following theorem is our main result of this paper. 

\begin{Theorem}\label{main}
Let $\Pc$ be an integral simplex of dimension $d$ and 
$\delta(\Pc)=(\delta_0,\delta_1,\ldots,\delta_d)$ its $\delta$-vector. 
Suppose that $\vol(\Pc)=\sum_{i=0}^d \delta_i=p$ is an odd prime number. 
Let $i_1,\ldots,i_{p-1}$ be the positive integers such that 
$\sum_{i=0}^d\delta_it^i=1+t^{i_1}+\cdots+t^{i_{p-1}}$ 
with $1 \leq i_1 \leq \cdots \leq i_{p-1} \leq d$. Then, 
\begin{itemize}
\item[(a)] {\em (cf. \cite[Theorem 2.3]{HigashiSS})} one has 
\begin{eqnarray*}
i_1+i_{p-1}=i_2+i_{p-2}=\cdots=i_{(p-1)/2}+i_{(p+1)/2} \leq d+1; 
\end{eqnarray*}
\item[(b)] one has 
\begin{eqnarray*}
i_k+i_{\ell} \geq i_{k+ \ell} \; \text{ for } \; 1 \leq k \leq \ell \leq p-1 
\; \text{ with } \; k+ \ell \leq p-1. 
\end{eqnarray*}
\end{itemize}
\end{Theorem}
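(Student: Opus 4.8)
The natural tool here is the theory of the $\delta$-vector of an integral simplex via its associated finite abelian group. Write $\Pc = \con(v_0, \ldots, v_d)$ and lift the vertices to $\hat v_j = (v_j, 1) \in \ZZ^{N+1}$. The key object is the group
\[
G = \left\{ \sum_{j=0}^d r_j \hat v_j : r_j \in \QQ,\ 0 \le r_j < 1 \right\} \Big/ \sim,
\]
or more precisely $G = \ZZ^{N+1}/\bigoplus_{j=0}^d \ZZ \hat v_j$ restricted to the relevant sublattice, which has order $\vol(\Pc) = p$. Each element $x \in G$ has a ``height'' $\hgt(x) = \sum_j r_j \in \ZZ_{\geq 0}$, and the fundamental fact (Stanley/Hibi) is that $\delta_i = \#\{x \in G : \hgt(x) = i\}$. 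Since $|G| = p$ is prime, $G \cong \ZZ/p\ZZ$ and is generated by any nonzero element; moreover $\sum_{i=0}^d \delta_i t^i = \sum_{x \in G} t^{\hgt(x)}$, so the exponents $0, i_1, \ldots, i_{p-1}$ are exactly the heights of the $p$ group elements. First I would record this setup carefully, citing the established description of $\delta$-vectors of simplices.

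For part (a): pick a generator $g$ of $G \cong \ZZ/p\ZZ$, and for $k = 0, 1, \ldots, p-1$ let $h_k = \hgt(kg)$ where $kg$ denotes the $k$-th multiple in $G$ (so the multiset $\{h_0, \ldots, h_{p-1}\}$ equals $\{0, i_1, \ldots, i_{p-1}\}$, though the indexing by $k$ need not be the sorted one). The crucial observation is that $\hgt(x) + \hgt(-x) = d+1$ whenever $x \neq 0$ in $G$ — this is the standard complementation/reciprocity symmetry for simplices, coming from $\sum_j r_j + \sum_j (1 - r_j) = d+1$ with all $r_j \neq 0$. Hence $h_k + h_{p-k} = d+1$ for $1 \le k \le p-1$. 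Now I would invoke \cite[Theorem 2.3]{HigashiSS} (or reprove the needed piece): for a prime-volume simplex one can choose the generator so that the heights are ``balanced'' in the sense that $h_k + h_\ell$ depends only on $k + \ell \bmod p$ up to the structure forced below — more concretely, the statement $i_k + i_{p-k}$ is independent of $k$ is what (a) asserts, and combined with $h_k + h_{p-k} = d+1$ one gets each such sum equals $d+1$... except the inequality $\le d+1$ (not equality) signals that the sorted $i_k$ need not align with the multiples of a single generator. I would handle this by ordering: since the $i_j$ are the sorted heights, pairing the smallest with the largest and using the involution $x \mapsto -x$ together with a rearrangement/majorization argument (each $i_j + i_{p-j} \le d+1$, with the sum over all pairs being $\binom{p-1}{2}$-controlled) forces all these pairwise sums to coincide.

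For part (b): this is a subadditivity statement, $i_k + i_\ell \ge i_{k+\ell}$ for $k + \ell \le p-1$. Working with a fixed generator $g$, if the heights along multiples of $g$ were literally subadditive — $\hgt(ag) + \hgt(bg) \ge \hgt((a+b)g)$ — we would be done after checking that sorting preserves the inequality. Subadditivity of height in $G$ holds when we can add the fractional representatives without ``carrying'' past $1$; in general $\hgt(x+y) \le \hgt(x) + \hgt(y)$ can fail, but the reverse-type bound we want, $i_{k+\ell} \le i_k + i_\ell$, is precisely a statement that the $(k+\ell)$-th smallest height is at most the sum of the $k$-th and $\ell$-th smallest. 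Here I expect the main obstacle: reconciling the multiplicative/group indexing with the sorted indexing. The plan is to use (a) — which gives strong control, essentially that the $i_j$ form an arithmetic-like progression structure modulo the symmetry $i_j + i_{p-j} = $ const — to set up an exchange argument: given $k, \ell$, use the generator realizing (a) to find group elements $x, y$ with $\hgt(x) \le i_k$, $\hgt(y) \le i_\ell$ and $\hgt(x + y) = i_{k+\ell}$, then bound $\hgt(x+y) \le \hgt(x) + \hgt(y)$ by a direct computation with the fractional parts, noting that the prime-volume hypothesis forces enough rigidity that no bad carrying occurs. I would first prove the clean group-theoretic inequality $\hgt(ag + bg) \le \hgt(ag) + \hgt(bg)$ might genuinely fail, so instead I would prove directly that for each $m$ the $m$-th smallest value among $\{\hgt(ng) : n\}$ is subadditive in $m$, using a counting/pigeonhole argument on how many group elements have height below a given threshold, combined with part (a). The hard part will be this last passage from the cyclic-group picture to the sorted inequality — I anticipate needing the full force of (a) plus a careful case analysis when $k + \ell$ is near $p-1$.
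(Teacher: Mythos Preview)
Your setup is right---the group $G\cong\ZZ/p\ZZ$ and the identification of $\{0,i_1,\ldots,i_{p-1}\}$ with the multiset of heights is exactly what the paper uses---but both parts have real gaps.

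For (a), you correctly note that $\hgt(x)+\hgt(-x)\le d+1$, and you also see that equality need not hold (some $r_j$ may be $0$). What you are missing is the reason all of these sums are \emph{equal to each other}. The paper's argument is: since $|G|=p$, every $r_j$ appearing in any nonzero element has denominator exactly $p$; hence $r_j+\{1-r_j\}$ is either $0$ or $1$, and for any $t$ not divisible by $p$ one has $\{tr_j\}+\{t(1-r_j)\}$ equal to the same $0$ or $1$. Since any $g_{j_2}$ is a $t$-fold sum of any $g_{j_1}$, this forces $\hgt(g_{j_1})+\hgt(-g_{j_1})=\hgt(g_{j_2})+\hgt(-g_{j_2})$. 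Your ``rearrangement/majorization'' sketch does not produce this; you need the arithmetic fact about denominators. Once all inverse-pair sums are known to coincide, a short ordering argument (pair smallest with largest, etc.) gives $i_j'=i_{p-j}$.

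For (b), two issues. First, a minor confusion: height \emph{is} subadditive, always---$\hgt(x\oplus y)=\sum_q\{r_q+s_q\}\le\sum_q(r_q+s_q)=\hgt(x)+\hgt(y)$. What does \emph{not} follow is your proposed step ``find $x,y$ with $\hgt(x)\le i_k$, $\hgt(y)\le i_\ell$ and $\hgt(x+y)=i_{k+\ell}$'': there is no reason any particular sum should realize the $(k+\ell)$-th smallest height. The paper's key idea, which your proposal never reaches, is the Cauchy--Davenport theorem. Take $A=\{\mathbf{0},g_1,\ldots,g_\ell\}$ and $B=\{\mathbf{0},g_1,\ldots,g_k\}$ where $g_1,\ldots,g_\ell$ are elements of the $\ell$ smallest positive heights. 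Cauchy--Davenport gives $|A\oplus B|\ge\min(p,|A|+|B|-1)=k+\ell+1$, and every element of $A\oplus B$ has height $\le i_k+i_\ell$ by subadditivity. Hence at least $k+\ell$ nonzero elements have height $\le i_k+i_\ell$, so $i_{k+\ell}\le i_k+i_\ell$. Your ``counting/pigeonhole'' gesture is in the right spirit but without Cauchy--Davenport (or an equivalent sumset bound in $\ZZ/p\ZZ$) you cannot control $|A\oplus B|$, and the argument does not close.
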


We give a proof of Theorem \ref{main} in Section \ref{sec:review}.

Now, we remark that the part (a) of Theorem \ref{main} is not a new result in some sense. 
In \cite[Theorem 2.3]{HigashiSS}, the author proved that 
for an integral simplex $\Pc$ with prime normalized volume, 
if $i_1+i_{p-1}=d+1$, then $\Pc$ is shifted symmetric, i.e., 
we have $i_1+i_{p-1}=i_2+i_{p-2}=\cdots=i_{(p-1)/2}+i_{(p+1)/2}$. 
Moreover, since every integral simplex with prime normalized volume is 
either a simplex with $i_1+i_{p-1}=d+1$ or a pyramid at height 1 over such simplex 
and taking such a pyramid does not change the normalized volume and 
the polynomial $1+\sum_{j=1}^{p-1}t^{i_j}$, 
we also obtain the equalities $i_1+i_{p-1}=i_2+i_{p-2}=\cdots=i_{(p-1)/2}+i_{(p+1)/2}$ 
on the case where $i_1+i_{p-1}<d+1$. 
On the other hand, in this paper, we give an another proof of this statement. 
More precisely, we give an elementary proof of Theorem \ref{main} (a) 
in terms of some abelian groups associated with integral simplices.

In addition, as an application of Theorem \ref{main}, 
we give a complete characterization of the possible $\delta$-vectors of 
integral simplices when $\sum_{i=0}^d\delta_i=5$ and 7. 

\begin{Theorem}\label{delta5}
Given a finite sequence $(\delta_0,\delta_1,\ldots,\delta_d)$ of nonnegative integers, 
where $\delta_0=1$ and $\sum_{i=0}^d\delta_i=5$, 
there exists an integral simplex $\Pc \subset \RR^d$ of dimension $d$ 
whose $\delta$-vector coincides with $(\delta_0,\delta_1,\ldots,\delta_d)$ 
if and only if $i_1,\ldots,i_4$ satisfy 
$i_1+i_4=i_2+i_3 \leq d+1$ and $i_k+i_{\ell} \geq i_{k+\ell}$ 
for $1 \leq k \leq \ell \leq 4$ with $k+\ell \leq 4$, 
where $i_1,\ldots,i_4$ are the positive integers such that 
$\sum_{i=0}^d\delta_it^i=1+t^{i_1}+\cdots+t^{i_4}$ 
with $1 \leq i_1 \leq \cdots \leq i_4 \leq d$. 
\end{Theorem}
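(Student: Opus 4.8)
The plan is to prove the two directions separately. The "only if" direction is immediate from Theorem \ref{main}: since $\sum_{i=0}^d\delta_i = 5$ is an odd prime, parts (a) and (b) of Theorem \ref{main} give exactly the stated inequalities $i_1 + i_4 = i_2 + i_3 \leq d+1$ and $i_k + i_\ell \geq i_{k+\ell}$ for $1 \leq k \leq \ell \leq 4$ with $k+\ell \leq 4$ (here the chain of equalities in (a) collapses to the single equality $i_1+i_4 = i_2+i_3$ because $p=5$, so $(p-1)/2 = 2$). So the real content is the "if" direction: given a sequence of nonnegative integers $(1,\delta_1,\ldots,\delta_d)$ summing to $5$ whose associated exponents $i_1 \leq i_2 \leq i_3 \leq i_4$ satisfy those constraints, construct an integral simplex realizing it.

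For the construction, I would use the standard correspondence (used in \cite{HHN}, \cite{HHNan}, \cite{HigashiSS}) between integral simplices of dimension $d$ with normalized volume $p$ and the finite abelian group data: a $d$-simplex with normalized volume $p$ corresponds, via its vertices spanning a sublattice of index $p$ in $\ZZ^{d+1} / \ZZ(1,\dots,1)$ (or more precisely through the group $\ZZ^{d+1}/(\text{lattice generated by the }(v_i,1))$), to a cyclic group $\ZZ/p\ZZ$ together with a choice of weights. Concretely, for $p=5$ an integral simplex is (up to the relevant equivalence) determined by a vector $(a_0, a_1, \ldots, a_d) \in (\ZZ/5\ZZ)^{d+1}$ with $\sum a_j \equiv 0 \pmod 5$ and $\gcd(a_0,\dots,a_d,5)=1$, and the $\delta$-vector is computed by $\delta_i = \#\{\, b \in \{0,1,2,3,4\} : \sum_j \lceil b a_j / 5 \rceil = i \,\}$, i.e. counting the "fractional part sum" $\sum_j \langle b a_j / 5\rangle$ appropriately. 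The exponents $i_1,\dots,i_4$ are then the four nonzero values of the function $b \mapsto \sum_j \langle - b a_j/5 \rangle$ as $b$ ranges over $1,2,3,4$. So the task reduces to: given target values $i_1 \leq i_2 \leq i_3 \leq i_4$ satisfying the constraints, find weights $a_j$ realizing them.

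The key combinatorial step is to turn the constraints into an explicit choice of weights. Since $5$ is prime, multiplication by $b$ permutes $\{1,2,3,4\}$, and the four sums $f(b) = \sum_j \langle b a_j / 5 \rangle$ for $b=1,\dots,4$ satisfy $f(b) + f(5-b) = $ (number of $j$ with $a_j \not\equiv 0$), which is the source of the symmetry $i_1+i_4 = i_2+i_3$. I would build the simplex as a "multiple pyramid": start with a low-dimensional simplex carrying the essential weight data (only finitely many $a_j \neq 0$, and their number is controlled by $i_1+i_4$) and then take iterated pyramids of height $1$ to reach dimension $d$ — pyramids over a point at lattice height $1$ change neither $\vol$ nor the polynomial $1 + t^{i_1} + \cdots + t^{i_4}$, only padding $d$. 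Thus it suffices to realize each admissible pattern in its minimal dimension, and there the number of cases is bounded: the multiset $\{i_1,i_2,i_3,i_4\}$ with $i_1+i_4=i_2+i_3$ and the super-additivity constraints $i_k+i_\ell \geq i_{k+\ell}$ leaves only a short finite list of "shapes" to handle, and for each I would exhibit explicit weights $(a_0,\dots,a_d)$ (for instance patterns like all weights equal to $1$, or weights $1,1,\dots,1,2,2,\dots$, or $1,\dots,1,3$) and verify by the $\lceil \cdot \rceil$-counting formula that the resulting $\delta$-vector is the desired one.

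The main obstacle I anticipate is the bookkeeping in this case analysis: one must be sure the list of shapes compatible with $i_1+i_4=i_2+i_3 \leq d+1$ and all the super-additivity inequalities is genuinely exhausted, and that for every shape the explicit weight vector produces not just the right multiset $\{i_1,\dots,i_4\}$ but the right $\delta$-vector entry-by-entry (the constraint $i_4 \leq d$ and the inequality $i_1+i_4 \leq d+1$ must be used to guarantee enough room in dimension $d$). A secondary technical point is justifying the reduction to minimal dimension — i.e. that the pyramid operation behaves as claimed on the $\delta$-vector — but this is standard (it is exactly the reasoning used in the Introduction to pass from $i_1+i_{p-1}=d+1$ to $i_1+i_{p-1}<d+1$), so I would cite it rather than reprove it. Once the finite list is checked, assembling the simplex of the prescribed dimension $d$ by padding with pyramids completes the argument.
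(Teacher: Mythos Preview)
Your proposal is correct and takes essentially the same approach as the paper: the paper also derives the ``only if'' direction immediately from Theorem~\ref{main} and proves the ``if'' direction by a finite case analysis on the shape of $(i_1,i_2,i_3,i_4)$ (its cases (i)--(iv) are exactly your ``short finite list of shapes''), constructing in each case an explicit simplex $\Pc_5(d_1,d_2,d_3,d_4)$ via an integer matrix whose row data modulo~$5$ is your weight vector $(a_0,\dots,a_d)$, and verifying the $\delta$-vector via Lemma~\ref{lemmaM}, which is precisely your fractional-part-sum formula. The only cosmetic difference is that the paper builds the pyramids into the matrix $A_5$ (as extra identity rows) rather than adjoining them afterward, so no separate pyramid-reduction step appears.
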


\begin{Theorem}\label{delta7}
Given a finite sequence $(\delta_0,\delta_1,\ldots,\delta_d)$ of nonnegative integers, 
where $\delta_0=1$ and $\sum_{i=0}^d\delta_i=7$, 
there exists an integral simplex $\Pc \subset \RR^d$ of dimension $d$ 
whose $\delta$-vector coincides with $(\delta_0,\delta_1,\ldots,\delta_d)$ 
if and only if $i_1,\ldots,i_6$ satisfy $i_1+i_6=i_2+i_5=i_3+i_4 \leq d+1$ and 
$i_k+i_{\ell} \geq i_{k+\ell}$ for $1 \leq k \leq \ell \leq 6$ with $k+\ell \leq 6$, 
where $i_1,\ldots,i_6$ are the positive integers such that 
$\sum_{i=0}^d\delta_it^i=1+t^{i_1}+\cdots+t^{i_6}$ 
with $1 \leq i_1 \leq \cdots \leq i_6 \leq d$. 
\end{Theorem}

By virtue of Theorem \ref{main}, the ``Only if'' parts of Theorem \ref{delta5} and \ref{delta7} 
are obvious. A proof of the ``If'' part of Theomre \ref{delta5} 
is given in Section \ref{sec:classify5} and that of Theorem \ref{delta7} 
is given in Section \ref{sec:classify7}. 
Moreover, in Section \ref{sec:towards}, we note some problems 
towards the classification of Ehrhart polynomials of integral convex polytopes 
with general normalized volumes.

\bigskip

\section{A proof of Theorem \ref{main}}\label{sec:review}

The goal of this section is to give a proof of Theorem \ref{main}. 

First of all, we recall 
the well-known combinatorial technique how to compute 
the $\delta$-vector of an integral simplex. 
Given an integral simplex $\Fc \subset \RR ^N$ of dimension $d$ 
with the vertices $v_0, v_1, \ldots, v_d \in \RR^N$, we set 
\begin{eqnarray*}
\bx(\Fc)=\left\{ \alpha \in \ZZ^{N+1} : \alpha = \sum_{i=0}^d r_i(v_i,1), \;\;
0 \leq r_i < 1 \right\}. 
\end{eqnarray*}
We define the degree of $\alpha=\sum_{i=0}^{d}r_i(v_i,1) \in \bx(\Fc)$ 
with $\deg(\alpha)=\sum_{i=0}^d r_i$, i.e., the last coordinate of $\alpha$. 
Then we have the following 
\begin{Lemma}\label{compute}
Let $\delta(\Fc)=(\delta_0,\delta_1,\ldots,\delta_d).$ 
Then each $\delta_i$ is equal to the number of integer points $\alpha \in \bx(\Fc)$ with $\deg(\alpha)=i$. 
\end{Lemma}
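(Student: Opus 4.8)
The plan is to identify the generating function $\sum_{n\geq 0} i(\Fc,n)\lambda^n$ with a rational function whose numerator records the degrees of the box points, and then compare with the defining formula \eqref{delta} for the $\delta$-vector. Concretely, let $C \subset \RR^{N+1}$ be the cone over $\Fc$ spanned by the vectors $(v_0,1),\dots,(v_d,1)$, and for an integer point $\beta \in C \cap \ZZ^{N+1}$ write its last coordinate as $\deg(\beta)$; then $i(\Fc,n) = |\{\beta \in C\cap\ZZ^{N+1} : \deg(\beta)=n\}|$, so that $\sum_{n\geq 0} i(\Fc,n)\lambda^n = \sum_{\beta \in C\cap\ZZ^{N+1}} \lambda^{\deg(\beta)}$. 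The key geometric input is that, since $(v_0,1),\dots,(v_d,1)$ are linearly independent, every integer point $\beta$ of $C$ is uniquely written as $\beta = \alpha + \sum_{i=0}^d m_i(v_i,1)$ with $\alpha \in \bx(\Fc)$ and $m_i \in \ZZ_{\geq 0}$; this is the standard half-open decomposition of the cone into translates of the fundamental parallelepiped.

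From this unique decomposition the generating function factors:
\begin{eqnarray*}
\sum_{n\geq 0} i(\Fc,n)\lambda^n
= \left(\sum_{\alpha \in \bx(\Fc)} \lambda^{\deg(\alpha)}\right)\cdot \prod_{i=0}^d \left(\sum_{m_i\geq 0}\lambda^{m_i}\right)
= \frac{\sum_{\alpha\in\bx(\Fc)}\lambda^{\deg(\alpha)}}{(1-\lambda)^{d+1}},
\end{eqnarray*}
where I use that $\deg$ is additive and that $\deg((v_i,1))=1$. Multiplying through by $(1-\lambda)^{d+1}$ and comparing with \eqref{delta} gives $\sum_{i=0}^\infty \delta_i\lambda^i = \sum_{\alpha\in\bx(\Fc)}\lambda^{\deg(\alpha)}$, and since each $\alpha \in \bx(\Fc)$ has $\deg(\alpha)=\sum_i r_i$ with $0\leq r_i<1$, hence $0 \leq \deg(\alpha) \leq d$ and $\deg(\alpha)\in\ZZ$, reading off the coefficient of $\lambda^i$ yields exactly that $\delta_i$ counts the box points of degree $i$.

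The step that needs the most care is the unique half-open decomposition of $C\cap\ZZ^{N+1}$: one must check that for $\beta\in C\cap\ZZ^{N+1}$, writing $\beta = \sum_i r_i(v_i,1)$ with $r_i\in\RR_{\geq 0}$ (possible and unique by linear independence), the point $\alpha := \sum_i \{r_i\}(v_i,1)$ is an integer point — this is where integrality of $\Fc$ and of $\beta$ is used, via $\beta - \alpha = \sum_i \lfloor r_i\rfloor (v_i,1) \in \ZZ^{N+1}$ — and that $\alpha$ lies in $\bx(\Fc)$ with the asserted $m_i=\lfloor r_i\rfloor$, conversely that every such sum lies in $C\cap\ZZ^{N+1}$. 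I would also remark that $\deg(\alpha)$ is automatically a nonnegative integer bounded by $d$, which re-proves $\delta_i\geq 0$ and $\delta_i=0$ for $i>d$ in this special case. This is all classical (it is essentially the cone-point-counting argument behind Stanley's nonnegativity theorem), so I would present it concisely rather than in full detail.
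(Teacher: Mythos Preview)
Your argument is correct and is precisely the standard cone-over-the-simplex / half-open parallelepiped proof of this classical fact. Note, however, that the paper does not actually give its own proof of Lemma~\ref{compute}: it is stated without proof as a ``well-known combinatorial technique'' and then used as a black box. So there is nothing to compare against; your write-up simply supplies the omitted (standard) justification, and the remark that $0\le\deg(\alpha)\le d$ recovers $\delta_i\ge 0$ is a nice aside.
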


Notice that $\bx(\Fc)$ has a structure of an abelian group with a unit ${\bf 0} \in \bx(\Fc)$, 
where ${\bf 0}=(0,\ldots,0)$. 
For $\alpha$ and $\beta$ in $\bx(\Fc)$ with 
$\alpha=\sum_{i=0}^dr_i(v_i,1)$ and $\beta=\sum_{i=0}^ds_i(v_i,1)$, 
where $r_i,s_i \in \QQ$ with $0 \leq r_i,s_i <1$, we define the operation in $\bx(\Fc)$ 
by setting $\alpha \oplus \beta := \sum_{i=0}^d\{r_i+s_i\}(v_i,1)$, 
where $\{r\}=r-\lfloor r \rfloor$ denotes the fractional part of a rational number $r$. 
(Throughout this paper, in order to distinguish the operation in $\bx(\Fc)$ 
from the usual addition, we use the notation $\oplus$, which is not a direct sum.) 


\smallskip

We prove Theorem \ref{main} by using the above notations. 

\begin{proof}[Proof of Theorem \ref{main}]
Let $v_0,v_1,\ldots,v_d$ be the vertices of the integral simplex $\Pc$ 
and $\bx(\Pc)$ the abelian group as above. 
Then, since $\vol(\Pc)=p$ is prime, it follows from Lemma \ref{compute} that 
$|\bx(\Pc)|$ is also prime. In particular, $\bx(\Pc) \cong \ZZ/p \ZZ$. 

(a) Write $g_1,\ldots,g_{p-1}$ for $(p-1)$ distinct elements 
belonging to $\bx(\Pc) \setminus \{{\bf 0}\}$ with $\deg(g_j)=i_j$ for $1 \leq j \leq p-1$, 
that is, $\bx(\Pc)=\{{\bf 0},g_1,\ldots,g_{p-1}\}$. 
Then, for each $g_j$, there exists its inverse $-g_j$ in $\bx(\Pc) \setminus \{{\bf 0}\}$. 
Let $-g_j=g_j'$. If $g_j$ has an expression $g_j=\sum_{q=0}^dr_q(v_q,1)$, 
where $r_q \in \QQ$ with $0 \leq r_q <1$, then its inverse has an expression 
$g_j'=\sum_{q=0}^d\{1-r_q\}(v_q,1)$. Thus, one has 
$$\deg(g_j)+\deg(g_j') = \sum_{q=0}^d(r_q+\{1-r_q\}) 
\leq \sum_{q=0}^d(r_q+1-r_q)=d+1$$ for all $1 \leq j \leq p-1$. 

For $1 \leq j_1 \not= j_2 \leq p-1$, let 
$g_{j_1}=\sum_{q=0}^dr_q^{(1)}(v_q,1)$ and $g_{j_2}=\sum_{q=0}^dr_q^{(2)}(v_q,1).$ 
Since $\bx(\Pc) \cong \ZZ/p \ZZ$, $g_{j_1}$ generates $\bx(\Pc)$. 
This implies that we can write $g_{j_2}$ and $g_{j_2}'$ as follows: 
$$g_{j_2}=\underbrace{g_{j_1} \oplus \cdots \oplus g_{j_1}}_t, \;\;\; 
g_{j_2}'=\underbrace{g_{j_1}' \oplus \cdots \oplus g_{j_1}'}_t$$ 
for some integer $t \in \{2,\ldots,p-1\}$. Thus, we have 
\begin{eqnarray*}
&&\sum_{q=0}^d(r_q^{(2)}+\{1-r_q^{(2)}\})=\deg(g_{j_2})+\deg(g_{j_2}') \\
&&\quad\quad =\deg(\underbrace{g_{j_1} \oplus \cdots \oplus g_{j_1}}_t) + 
\deg(\underbrace{g_{j_1}' \oplus \cdots \oplus g_{j_1}'}_t)= 
\sum_{q=0}^d(\{tr_q^{(1)}\}+\{t(1-r_q^{(1)})\}). 
\end{eqnarray*}
Moreover, since 
$\underbrace{g_{j_1} \oplus \cdots \oplus g_{j_1}}_p={\bf 0}$, 
we have $\{pr_q^{(1)}\}=0$ for $0 \leq q \leq d$. 
This means that 
the denominator of each rational number $r_q^{(1)}$ must be $p$. 
Hence, if $0< r_q^{(1)} <1$ (resp. $0< \{1-r_q^{(1)}\} <1$), 
then $0<\{tr_q^{(1)}\}<1$ (resp. $0<\{t(1-r_q^{(1)})\}<1$), 
so $r_q^{(1)}+\{1-r_q^{(1)}\}=\{tr_q^{(1)}\}+\{t(1-r_q^{(1)})\}=1$. 
In addition, obviously, if $r_q^{(1)}=\{1-r_q^{(1)}\}=0$, 
then $\{tr_q^{(1)}\}=\{t(1-r_q^{(1)})\}=0$, 
so $r_q^{(1)}+\{1-r_q^{(1)}\}=\{tr_q^{(1)}\}+\{t(1-r_q^{(1)})\}=0$. 
Thus, $\deg(g_{j_1})+\deg(g_{j_1}')=\deg(g_{j_2})+\deg(g_{j_2}')$. 
Let $\deg(g_j')=i_j'$. Then we obtain 
$$i_1+i_1'=\cdots=i_{(p-1)/2}+i_{(p-1)/2}' 
(=i_{(p+1)/2}+i_{(p+1)/2}'=\cdots=i_{p-1}+i_{p-1}') \leq d+1.$$ 
Our work is to show that $i_j'=i_{p-j}$ for $1 \leq j \leq (p-1)/2$. 

First, we consider $i_1'$. Suppose that $i_1'\not=i_{p-1}$. 
Then, there is $m \in \{1,\ldots,p-2\}$ with $i_1'=i_m<i_{p-1}$. 
Thus, it follows that 
$$i_{p-1}+i_{p-1}'=i_1+i_1'=i_1+i_m <i_1+i_{p-1} \leq i_{p-1}' +i_{p-1},$$ 
a contradiction. Thus, $i_1'$ must be $i_{p-1}$. 
Next, we consider $i_2'$. 
Since $g_{i_2}' \not=g_{i_1}$ and $g_{i_2}' \not=g_{i_{p-1}},$ 
we may consider $i_2'$ among $\{i_2,\ldots,i_{p-2}\}$. 
Then, the same discussion can be done. Hence, $i_2'=i_{p-2}$. 
Similarly, we have $i_3'=i_{p-3}, \ldots, i_{(p-1)/2}'=i_{(p+1)/2}$. 

Therefore, we obtain the desired 
$$i_1+i_{p-1}=i_2+i_{p-2}=\cdots=i_{(p-1)/2}+i_{(p+1)/2} \leq d+1.$$ 

(b) Let $k$ and $\ell$ be integers with $1 \leq k \leq \ell \leq p-1$ and $k+\ell \leq p-1$. 
Write $g_1,\ldots,g_{\ell} \in \bx(\Pc) \setminus \{{\bf 0}\}$ 
for $\ell$ distinct elements with $\deg(g_j)=i_j$ for $1 \leq j \leq \ell$ 
and set $A=\{g_1,\ldots,g_{\ell}\} \cup \{{\bf 0}\}$ and 
$B=\{g_1,\ldots,g_k\} \cup \{{\bf 0}\}$. 
Now, Cauchy--Davenport Theorem (cf \cite[Theorem 2.2]{Nath}) 
guarantees that $|A \oplus B| \geq \min\{ p, |A|+|B|-1\}$, 
where $A \oplus B=\{ a \oplus b: a \in A, b \in B\}$. 
Clearly, {\bf 0} belongs to $A \oplus B$. Moreover, since $|A|+|B|-1=k+\ell+1 \leq p$, 
it follows that $A \oplus B$ contains at least $(k+ \ell)$ distinct elements in 
$\bx(\Pc) \setminus \{{\bf 0}\}$. In addition, for each $g \in A \oplus B$, 
$g$ satisfies $\deg(g) \leq i_k+i_{\ell}$. 
In fact, for non-zero elements $g_j \in A$ and $g_{j'} \in B$, 
if they have expressions 
$g_j=\sum_{q=0}^dr_q(v_q,1)$ and $g_{j'}=\sum_{q=0}^dr_q'(v_q,1),$ 
then one has $$ \deg(g_j \oplus g_{j'}) = \sum_{q=0}^d\{r_q+r_q'\} 
\leq \sum_{q=0}^d(r_q+r_q') = i_j+i_{j'} \leq i_k+i_{\ell}.$$ 

Hence, from the definition of $i_1,\ldots,i_{p-1}$, 
we obtain the inequalities $i_k+i_{\ell} \geq i_{k+\ell}$ 
for $1 \leq k \leq \ell \leq p-1$ with $k+\ell \leq p-1$, as desired. 
\end{proof}

It is easy to see that we can reduce the inequalities in Theorem \ref{main} (b) 
by using the equalities $i_1+i_{p-1}=i_2+i_{p-2}=\cdots=i_{(p-1)/2}+i_{(p+1)/2}$ as follows: 
\begin{eqnarray}\label{reduce}
i_k+i_{\ell} \geq i_{k+\ell} \;\text{ for }\; 
1 \leq k \leq \left\lfloor \frac{p-1}{3} \right\rfloor 
\; \text{ and } \; k \leq \ell \leq \left\lfloor \frac{p-k}{2} \right\rfloor. \end{eqnarray} 
In fact, when $k > \lfloor (p-1)/3 \rfloor$, since $p$ is prime, we have $k > p/3$. 
Thus, $k + 2 \ell \geq 3k > p$. By using $i_{k+\ell}+i_{p-k-\ell}=i_{\ell}+i_{p-\ell}$, 
we obtain $i_k+i_{\ell}-i_{k+\ell}=i_k+i_{p-k-\ell} - i_{p-\ell} \geq 0$, 
which is $i_k+i_{p-k-\ell} \geq i_{p-\ell}$, where $p-k-\ell < \ell$. 
Similarly, when $\ell > \lfloor (p-k)/2 \rfloor$, we have $k+2\ell > p$. 
Thus, we can deduce $i_k+i_{p-k-\ell} \geq i_{p-\ell}$. 

Moreover, some of the inequalities described in Theorem \ref{main} 
follow from \eqref{Stanley} and \eqref{Hibi}. 
\begin{Proposition}\label{equiv}
Let $\Pc$ be an integral convex polytope of dimension $d$ 
with its $\delta$-vector $(\delta_0,\delta_1,\ldots,\delta_d)$ 
and $i_1,\ldots,i_{m-1}$ the positive integers such that 
$\sum_{i=0}^d\delta_it^i=1+t^{i_1}+\cdots+t^{i_{m-1}}$ 
with $1 \leq i_1 \leq \cdots \leq i_{m-1} \leq d$, where $m=\sum_{i=0}^d\delta_i$. \\
{\em (a)} The inequalities $i_j+i_{m-j-1} \geq i_{m-1}$, where $1 \leq j \leq m-2$, 
are equivalent to \eqref{Stanley}. \\
{\em (b)} The inequalities $i_j+i_{m-j} \leq d+1$, where $1 \leq j \leq m-1$, 
are equivalent to \eqref{Hibi}. 
\end{Proposition}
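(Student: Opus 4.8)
The plan is to translate both sides of each equivalence into statements about the multiset $\{i_1,\dots,i_{m-1}\}$ and its order statistics, after which every implication reduces to a one-line counting argument. First I would fix the dictionary. Since $\delta_0=1$ and $\delta_k=\#\{j:i_j=k\}$ for $k\ge1$, one has
$$\sum_{k=0}^{i}\delta_k=1+\#\{j:i_j\le i\}\quad(i\ge0),\qquad\sum_{k=e}^{d}\delta_k=\#\{j:i_j\ge e\}\quad(e\ge1),$$
and, since $s:=\max\{k:\delta_k\ne0\}$ equals $i_{m-1}$ (for $m\ge2$; the case $m=1$ is vacuous), also $\sum_{k=e}^{s}\delta_k=\#\{j:i_j\ge e\}$ for $1\le e\le s$. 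Hence \eqref{Stanley} at index $i$ (for $0\le i\le\lfloor s/2\rfloor$) reads $1+\#\{j:i_j\le i\}\le\#\{j:i_j\ge s-i\}$, and, writing $e=d-i$ so that $i+1=d+1-e$, \eqref{Hibi} at index $i$ (for $0\le i\le\lfloor(d-1)/2\rfloor$) reads $\#\{j:i_j\ge e\}\le\#\{j:i_j\le d+1-e\}$ as $e$ ranges over $\lceil(d+1)/2\rceil\le e\le d$. I would also use repeatedly the trivial equivalences $i_a\le c\iff\#\{j:i_j\le c\}\ge a$ and $i_a\ge c\iff\#\{j:i_j\ge c\}\ge m-a$ for the nondecreasing sequence $i_1\le\cdots\le i_{m-1}$.

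With this dictionary, I would prove each equivalence in both directions by feeding the right index into the already-available family. For part (a), assuming \eqref{Stanley}: fix $j$; since $i_j+i_{m-1-j}$ is symmetric under $j\mapsto m-1-j$ I may assume $j\le m-1-j$, so $i_{m-1-j}\ge i_j$; if $i_j\ge\lceil s/2\rceil$ then $i_j+i_{m-1-j}\ge2i_j\ge2\lceil s/2\rceil\ge s$, and if $i_j\le\lceil s/2\rceil-1$ then $i_j\le\lfloor s/2\rfloor$, so \eqref{Stanley} at index $i=i_j$ gives $\#\{k:i_k\ge s-i_j\}\ge1+\#\{k:i_k\le i_j\}\ge j+1$, whence $i_{m-1-j}\ge s-i_j$. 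Conversely, assuming the inequalities in (a), fix $i\le\lfloor s/2\rfloor$ and put $a=\#\{k:i_k\le i\}$: if $a=0$ the claim follows from $i_{m-1}=s\ge s-i$, and if $a\ge1$ then $a\le m-2$ (otherwise $i_{m-1}\le i<s$, absurd), so the hypothesis at $j=a$ gives $i_{m-1-a}\ge s-i_a\ge s-i$ and hence $\#\{k:i_k\ge s-i\}\ge a+1$, which is \eqref{Stanley}. Part (b) runs in parallel: assuming the inequalities in (b), for a given index $i$ of \eqref{Hibi} set $b=\#\{k:i_k\ge d-i\}$; if $b=0$ it is trivial, otherwise $i_{m-b}\ge d-i$, so the hypothesis at $j=b$ gives $i_b\le d+1-i_{m-b}\le d+1-(d-i)=i+1$, i.e. $\#\{k:i_k\le i+1\}\ge b$. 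Conversely, assuming \eqref{Hibi}, fix $j$ and (by the same symmetry) take $j\le m-j$: if $i_{m-j}\le\lceil(d+1)/2\rceil-1$ then $i_j+i_{m-j}\le2i_{m-j}\le2\lceil(d+1)/2\rceil-2\le d$, and if $i_{m-j}\ge\lceil(d+1)/2\rceil$ then $i=d-i_{m-j}$ is an admissible index for \eqref{Hibi}, which gives $\#\{k:i_k\le d+1-i_{m-j}\}\ge\#\{k:i_k\ge i_{m-j}\}\ge j$, i.e. $i_j\le d+1-i_{m-j}$.

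The genuinely routine but somewhat fiddly part will be the bookkeeping: checking in each application that the index fed into \eqref{Stanley} or \eqref{Hibi} (respectively into the $i_j$-inequalities) actually lies in the permitted range — e.g. that $a\le m-2$, that $d-i_{m-j}\in[0,\lfloor(d-1)/2\rfloor]$, that $i_j\le\lfloor s/2\rfloor$ in the relevant branch — and keeping the floor/ceiling thresholds consistent in the ``$i_j$ (or $i_{m-j}$) small versus large'' dichotomy, together with disposing of the degenerate cases $m=1$ and $s=0$. I do not anticipate any conceptual difficulty once the dictionary of the first paragraph is in place, since each of the remaining verifications is a single order-statistic step.
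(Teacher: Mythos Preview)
Your proposal is correct and follows essentially the same route as the paper: both arguments set up the same dictionary between partial sums of the $\delta_k$'s and counts $\#\{j:i_j\le c\}$, $\#\{j:i_j\ge c\}$, and then obtain each direction by plugging the right index ($i=i_j$, resp.\ the unique $j$ with $i_j\le k<i_{j+1}$) into the other family. Your write-up is in fact slightly more scrupulous than the paper's, which applies \eqref{Stanley} at $i=i_j$ without checking $i_j\le\lfloor s/2\rfloor$; your case split on ``$i_j$ small vs.\ large'' (and the analogous one for $i_{m-j}$ in part (b)) is exactly the bookkeeping needed to make that step airtight.
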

\begin{proof}
(a) 
For each $1 \leq j \leq m-2$, the inequality 
$\delta_0+\cdots+\delta_{i_j} \leq \delta_s+\cdots+\delta_{s-i_j}$ 
follows from \eqref{Stanley}. 
Then its left-hand side is at least $j+1$ by the definition of $i_j$. 
Thus, in particular, its right-hand side is at least $j+1$. 
On the other hand, since $s=i_{m-1}$, it must be $i_{m-1} - i_j \leq i_{m-j-1}$, 
which means $i_j+i_{m-j-1} \geq i_{m-1}$. 
On the contrary, assume that $i_j + i_{m-j-1} \geq i_{m-1}$. 
For each $k$ with $1 \leq k < i_{m-1}=s$, 
there exists a unique $j$ with $0 \leq j \leq m-1$ such that $i_j \leq k < i_{j+1}$, 
where $i_0=0$ and $i_m=d+1$. Thus, 
\begin{align*}
&\delta_s+\cdots+\delta_{s-k} - (\delta_0+\cdots+\delta_k) = 
\delta_{i_{m-1}}+\cdots+\delta_{i_{m-1}-k}-(j+1) \\
&\quad\quad\quad
\geq \delta_{i_{m-1}}+\cdots+\delta_{i_{m-1}-i_j}-(j+1) 
\geq \delta_{i_{m-1}}+\cdots+\delta_{i_{m-j-1}} - (j+1) \geq 0. 
\end{align*}
(b) 
For each $1 \leq j \leq m-1$, the inequality 
$\delta_1+\cdots+\delta_{d+1-i_j} \geq \delta_d + \cdots+ \delta_{i_j}$ 
follows from \eqref{Hibi}. 
Then its right-hand side is at least $m-j$. 
Thus, it must be $d+1-i_j \geq i_{m-j}$, which means $i_j+i_{m-j} \leq d+1$. 
On the contrary, assume that $i_j+i_{m-j} \leq d+1$. 
For each $k$ with $1 \leq k <d$, 
there exists a unique $j$ with $1 \leq j \leq m$ such that 
$i_{j-1} < k \leq i_j$. Thus, 
\begin{align*}
&\delta_1+\cdots+\delta_{d+1-k} - (\delta_d+\cdots+\delta_k) \geq 
\delta_1+\cdots+\delta_{d+1-i_j} - (m-j) \\
&\quad\quad\quad\quad\quad\quad\quad \geq 
\delta_1+\cdots+\delta_{i_{m-j}} - (m-j) \geq 0. 
\end{align*}
\end{proof}

As is shown above, the inequalities $i_j+i_{m-j-1} \geq i_{m-1}$ and $i_j+i_{m-j} \leq d+1$ 
are not new ones. Howover, the inequalities $i_k+i_{\ell} \geq i_{k+\ell}$ include 
many new ones. See Remark \ref{rem7} and Example \ref{77} below. 


\smallskip

\section{The possible $\delta$-vectors of integral simplices 
with $\sum_{i=0}^d \delta_i=5$}
\label{sec:classify5} 

In this section, we give a proof of the ``If'' part of Theorem \ref{delta5}, 
namely, we classify all the possible $\delta$-vectors of integral simplices 
with normalized volume 5. 

Let $(\delta_0,\delta_1,\ldots,\delta_d)$ be a nonnegative integer sequence 
with $\delta_0=1$ and $\sum_{i=0}^d\delta_i=5$ 
which satisfies $i_1+i_4=i_2+i_3 \leq d+1$, $2i_1 \geq i_2$ and $i_1+i_2 \geq i_3$, 
where $i_1,\ldots,i_4$ are the positive integers such that 
$\sum_{i=0}^d\delta_it^i=1+t^{i_1}+\cdots+t^{i_4}$ 
with $1 \leq i_1 \leq \cdots \leq i_4 \leq d$. 
By virtue of Theorem \ref{main}, these are necessary conditions 
for $(\delta_0,\delta_1,\ldots,\delta_d)$ to be a $\delta$-vector of some integral simplex. 
We notice that $i_1+i_3 \geq i_4$ (resp. $2i_2 \geq i_4$) is equivalent to 
$2i_1 \geq i_2$ (resp. $i_1+i_2 \geq i_3$) since $i_1+i_4=i_2+i_3$. 
From the conditions $\delta_0=1$, $\sum_{i=0}^d\delta_i=5$ and $i_1+i_4=i_2+i_3$, 
only the possible sequences look like 
\begin{itemize}
\item[(i)] $(1,0,\ldots,0,4,0,\ldots,0)$; 
\item[(ii)] $(1,0,\ldots,0,2,0,\ldots,0,2,0,\ldots,0)$; 
\item[(iii)] $(1,0,\ldots,0,1,0,\ldots,0,2,0,\ldots,0,1,0,\ldots,0)$; 
\item[(iv)] $(1,0,\ldots,0,1,0,\ldots,0,1,0,\ldots,0,1,0,\ldots,0,1,0,\ldots,0)$. 
\end{itemize}
Our work is to find integral simplices whose $\delta$-vectors are of the above forms. 

To construct integral simplices, we define the following $d \times d$ integer matrix: 
\begin{eqnarray}\label{M}
A_m(d_1,\ldots,d_{m-1})=\begin{pmatrix}
1 &       &     &  &  &       &  \\
  &\ddots &     &  &  &       &  \\
  &       &1    &  &  &       &  \\
* &\cdots &*    &m &  &       &  \\
  &       &     &  &1 &       &  \\
  &       &     &  &  &\ddots &  \\
  &       &     &  &  &       &1
\end{pmatrix},
\end{eqnarray}
where $m$ is a positive integer, 
there are $d_j$ $j$'s among the $*$'s for $j=1,\ldots,m-1$ 
and the rest entries are all 0. 
Clearly, it must be $d_j \geq 0$ and $d_1+\cdots+d_{m-1} \leq d-1$. 
Thus, by determining $d_1,\ldots,d_{m-1}$, we obtain an integer matrix $A_m(d_1,\ldots,d_{m-1})$ 
and we define the integral simplex $\Pc_m(d_1,\ldots,d_{m-1})$ of dimension $d$ 
from the matrix \eqref{M} as follows: 
$$\Pc_m(d_1,\ldots,d_{m-1})=\con(\{{\bf 0},v_1,\ldots,v_d\}) \subset \RR^d,$$ 
where $v_i$ is the $i$th row vector of \eqref{M}. 
The following lemma enables us to compute $\delta(\Pc_m(d_1,\ldots,d_{m-1}))$ easily. 

\begin{Lemma}[{\cite[Corollary 3.1]{HHNan}}]\label{lemmaM} 
Let $\delta(\Pc_m(d_1,\ldots,d_{m-1}))=(\delta_0,\delta_1,\ldots,\delta_d)$. Then 
$$\sum_{i=0}^d\delta_it^i=1+\sum_{i=1}^{m-1}t^{1-s_i},$$ where 
$$
s_i=\left\lfloor \frac{i}{m} - \sum_{j=1}^{m-1}\left\{ \frac{ij}{m} \right\}d_j \right\rfloor 
\;\;\;\;\; \text{for} \;\; i=1,\ldots,m-1. 
$$
\end{Lemma}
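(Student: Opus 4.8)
The plan is to compute $\bx(\Pc_m(d_1,\ldots,d_{m-1}))$ directly from the matrix $A_m(d_1,\ldots,d_{m-1})$. The key observation is that for the simplex $\Pc_m = \con(\{{\bf 0}, v_1, \ldots, v_d\})$ with $v_i$ the $i$th row of \eqref{M}, the matrix is almost the identity: it agrees with $I_d$ except in the row indexed by the position of the entry $m$ (say row $k$), where we have $m$ on the diagonal and various $j$'s among the $d_j$ entries to its left. Because of this near-triangular structure, the lattice $\ZZ^{d+1}$ modulo the lattice $L$ generated by $(v_0,1),\ldots,(v_d,1)$ — equivalently $\bx(\Pc_m)$ — is cyclic of order $m$, generated by the class of a single basis vector. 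First I would make this precise: using the homogenization, one checks $|\bx(\Pc_m)| = |\det A_m| = m$, so $\bx(\Pc_m)\cong\ZZ/m\ZZ$, and the class of the $k$th coordinate vector generates it, where $k$ is the row with the $m$.

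Next I would produce an explicit system of representatives for $\bx(\Pc_m)$ of the form $\alpha_i = \sum_{q=0}^d r_q^{(i)}(v_q,1)$ for $i = 0,1,\ldots,m-1$, and read off the degree of $\alpha_i$. Concretely, starting from $i$ copies of the generator, one writes $i\cdot e_k$ (the vector with $i$ in position $k$) as an integer combination of the rows $v_1,\ldots,v_d$ plus a ``fractional remainder'' whose coefficients $r_q^{(i)}$ lie in $[0,1)$; the coefficient attached to $v_k$ is forced to be $\{i/m\}$-type, and the coefficients attached to the rows $v_q$ with a nonzero entry $j$ in position $q$ of row $k$ pick up a term $\{ij/m\}$. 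Bookkeeping the last coordinate (the degree) then gives $\deg(\alpha_i) = i - m s_i$ where $s_i = \lfloor i/m - \sum_{j=1}^{m-1}\{ij/m\}d_j\rfloor$ is exactly the floor appearing in the statement, accounting for the integer part that gets absorbed into honest lattice translations. By Lemma \ref{compute}, $\delta_{\deg(\alpha_i)}$ counts these, so $\sum_{i=0}^d\delta_it^i = 1 + \sum_{i=1}^{m-1}t^{\deg(\alpha_i)} = 1 + \sum_{i=1}^{m-1}t^{1-s_i}$ (note $\deg(\alpha_0)=0$ giving the ``$1$''; one should double-check the exact normalization so that $\deg(\alpha_i)$ comes out as $1-s_i$ rather than $i-ms_i$, which amounts to verifying a small identity about how $i$ and $\sum\{ij/m\}d_j$ interact with the floor).

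The main obstacle I anticipate is the explicit choice of lattice-point representative and verifying that the fractional coefficients are exactly $\{i/m\}$ on the $k$th slot and $\{ij/m\}$ on the slots carrying the entry $j$, with everything else integral — i.e., getting the combinatorics of reducing $i e_k$ modulo the row lattice exactly right. This is essentially a careful but elementary computation with the specific sparse structure of \eqref{M}; since the paper cites this as \cite[Corollary 3.1]{HHNan}, I would either reproduce that argument or, more efficiently, invoke it. Indeed, given that Lemma \ref{lemmaM} is explicitly attributed to \cite[Corollary 3.1]{HHNan}, the honest ``proof'' here is simply a citation, and the role of this plan is to indicate how one would reprove it from scratch via the $\bx(\Pc)$-group formalism set up just above: identify $\bx(\Pc_m)\cong\ZZ/m\ZZ$, pick the right generator, expand its multiples, and extract degrees.
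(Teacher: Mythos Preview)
The paper gives no proof of this lemma at all; it is stated with attribution to \cite[Corollary 3.1]{HHNan} and used as a black box. You correctly recognize this, and your bottom line (``the honest `proof' here is simply a citation'') matches the paper exactly.

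Your from-scratch sketch via the $\bx(\Pc)$-formalism is the right strategy and is essentially how the cited result is proved, but one intermediate claim is wrong and worth fixing rather than hand-waving: the formula $\deg(\alpha_i)=i-ms_i$ is not correct. For instance, with $m=5$, $d_2=d_3=1$, $d_1=d_4=0$ one computes $s_1=-1$ and $\deg(\alpha_1)=2$, not $1-5(-1)=6$. The actual computation goes as follows. Writing $k$ for the row carrying the entry $m$ and $a_q$ for the $(k,q)$-entry, the constraints force $r_k=i/m$, $r_q=\{-a_qi/m\}$ for $1\le q<k$, $r_q=0$ for $q>k$, and $r_0$ is then determined by integrality of the last coordinate; hence
\[
\deg(\alpha_i)=\Bigl\lceil\, i/m+\textstyle\sum_{j=1}^{m-1}d_j\{-ij/m\}\,\Bigr\rceil,
\]
and one checks this equals $1-\lfloor i/m-\sum_j\{ij/m\}d_j\rfloor=1-s_i$ (use $\lceil x\rceil=-\lfloor -x\rfloor$ and track which $ij/m$ are integers). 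So the ``small identity'' you defer is not between $i-ms_i$ and $1-s_i$ but between the ceiling above and $1-s_i$; once that is stated correctly the argument goes through.
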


\bigskip

Let $m=5$. 
In the sequel, in each case of (i) -- (iv) above, 
by giving concrete values of $d_1,\ldots,d_4$, we obtain the matrix $A_5(d_1,\ldots,d_4)$ 
and hence the integral simplex $\Pc_5(d_1,\ldots,d_4)$ whose $\delta$-vector 
looks like each of (i) -- (iv). 
The $\delta$-vectors of such simplices can be computed by Lemma \ref{lemmaM}.

\subsection{The case (i)}

First, let us consider the case (i), namely, 
the nonnegative integer sequence like $(1,0,\ldots,0,4,0,\ldots,0)$, 
which means that $i_1=i_2=i_3=i_4$. 
Set $i_1=\cdots=i_4=i$. Then, of course, $i - 1 \geq 0$. 
Moreover, from our conditions, one has $2i=i_1+i_4=i_2+i_3 \leq d+1$, 
that is, $2i-2 \leq d-1$. 
Hence, we can define $\Pc_5(0,i-1,i-1,0)$ and calculate 
$$s_1=s_2=s_3=s_4=
\left\lfloor \frac{1}{5} - \sum_{j=1}^4\left\{ \frac{j}{5} \right\}d_j \right\rfloor= 
\left\lfloor \frac{1}{5} - \frac{2}{5}(i-1) 
- \frac{3}{5}(i-1)\right\rfloor=-i+1.$$ 
This implies that 
$\delta(\Pc_5(0,i-1,i-1,0))$ coincides with $(1,0,\ldots,4,0,\ldots,0)$ 
from Lemma \ref{lemmaM}, where $\delta_i=4$. 
Remark that $i$ should be at most $(d+1)/2$ by our condition (Theorem \ref{main} (a)). 

\smallskip

Similar discussions can be applied to the rest cases (ii) -- (iv).

\subsection{The case (ii)}

In this case, we have $i_1=i_2$ and $i_3=i_4$. 
Let $i_1=i_2=i$ and $i_3=i_4=j$. Thus, one has 
$2i \geq j$, $2j-2i-2 \geq 0$ and $i+j-2 \leq d-1$ from our conditions. 
Hence, we can define $\Pc_5(0,i,2i-j,2j-2i-2)$ 
and its $\delta$-vector coincides with (ii) 
since $s_1=s_2=-j+1$ and $s_3=s_4=-i+1$. 

\subsection{The case (iii)}

Let $i_1=i,i_2=i_3=j$ and $i_4=k$. Thus, one has 
$2i \geq j$, $3j-3i-2 \geq 0$ and $2j-2 \leq d-1$. 
Hence, we can define $\Pc_5(0,2i-j,i,3j-3i-2)$ 
and its $\delta$-vector coincides with (iii) 
since $s_1=-2j+i+1=-k+1$, $s_2=s_3=-j+1$ and $s_4=-i+1$. 

\subsection{The case (iv)}

In this case, one has $2i_1 \geq i_2, i_1+i_2 \geq i_3, i_2+2i_3-3i_1-2 \geq 0$ 
and $i_2+i_3-2 \leq d-1$. 
Hence, we can define $\Pc_5(0,2i_1-i_2,i_1+i_2-i_3,i_2+2i_3-3i_1-2)$ 
and its $\delta$-vector coincides with (iv) 
since $s_1=i_1-i_2-i_3+1=-i_4+1, s_2=-i_3+1, s_3=-i_2+1$ and $s_4=-i_1+1$.

\begin{Remark}{\em 
(a) The classification of the case (iv) 
is essentially given in \cite[Lemma 4.3]{HigashiSS}. \\
(b) Since $i_1+i_4=i_2+i_3$, the inequalities $2i_1 \geq i_2$ and $i_1+i_2 \geq i_3$ 
can be obtained from \eqref{Stanley} (see Proposition \ref{equiv}). 
Thus, the possible $\delta$-vectors of integral simplices 
with normalized volume 5 can be essentially characterized only by 
Theorem \ref{main} (a) and the inequalities \eqref{Stanley}. 
}\end{Remark}

\smallskip

\section{The possible $\delta$-vectors of integral simplices 
with $\sum_{i=0}^d \delta_i=7$}
\label{sec:classify7} 

In this section, similar to the previous section, 
we classify all the possible $\delta$-vectors of integral simplices 
with normalized volume 7. 

Let $(\delta_0,\delta_1,\ldots,\delta_d)$ be a nonnegative integer sequence 
with $\delta_0=1$ and $\sum_{i=0}^d\delta_i=7$ 
which satisfies $i_1+i_6=i_2+i_5=i_3+i_4 \leq d+1$, 
$i_1+i_l \geq i_{l+1}$ for $l=1,2,3$ and $2i_2 \geq i_4$, 
where $i_1,\ldots,i_6$ are the positive integers such that 
$\sum_{i=0}^d\delta_it^i=1+t^{i_1}+\cdots+t^{i_6}$ 
with $1 \leq i_1 \leq \cdots \leq i_6 \leq d$. 
Since $i_1+i_6=i_2+i_5=i_3+i_4$, we need not consider the inequalities 
$i_1+i_4 \geq i_5$, $i_1+i_5 \geq i_6$, 
$i_2+i_3 \geq i_5$, $i_2+i_4 \geq i_6$ and $2i_3 \geq i_6$. 
From the conditions $\delta_0=1$, $\sum_{i=0}^d\delta_i=7$ and $i_1+i_6=i_2+i_5=i_3+i_4$, 
only the possible sequences look like 
\begin{itemize}
\item[(i)] $(1,0,\ldots,0,6,0,\ldots,0)$; 
\item[(ii)] $(1,0,\ldots,0,3,0,\ldots,0,3,0,\ldots,0)$; 
\item[(iii)] $(1,0,\ldots,0,1,0,\ldots,0,4,0,\ldots,0,1,0,\ldots,0)$; 
\item[(iv)] $(1,0,\ldots,0,2,0,\ldots,0,2,0,\ldots,0,2,0,\ldots,0)$; 
\item[(v)] $(1,0,\ldots,0,1,0,\ldots,0,2,0,\ldots,0,2,0,\ldots,0,1,0,\ldots,0)$; 
\item[(vi)] $(1,0,\ldots,0,2,0,\ldots,0,1,0,\ldots,0,1,0,\ldots,0,2,0,\ldots,0)$; 
\item[(vii)] $(1,0,\ldots,0,1,0,\ldots,0,1,\ldots,0,2,0,\ldots,0,1,0,\ldots,0,1,0,\ldots,0)$; 
\item[(viii)] $(1,0,\ldots,0,1,0,\ldots,0,1,\ldots,0,1,0,\ldots,0,1,0,\ldots, 
0,1,0,\ldots,0,1,0,\ldots,0)$. 
\end{itemize}

%
%

\subsection{The case (i)} 
Let $i_1=\cdots=i_6=i$. Thus, one has $i-1 \geq 0$ and $2i-2 \leq d-1$ from our conditions. 
Hence, we can define $\Pc_7(0,0,i-1,i-1,0,0)$. By Lemma \ref{lemmaM}, 
$\delta(\Pc_7(0,0,i-1,i-1,0,0))$ coincides with (i) since $s_1=\cdots=s_6=-i+1$. 

\subsection{The case (ii)} 
Let $i_1=\cdots=i_3=i$ and $i_4=\cdots=i_6=j$. Thus, one has 
$j-i \geq 0, 2i \geq j, 2j-2i-2 \geq 0$ and $i+j-2 \leq d-1$. 
Hence, we can define $\Pc_7(0,j-i,2i-j,2i-j,0,2j-2i-2)$ 
and its $\delta$-vector coincides with (ii) 
since $s_1=s_2=s_3=-j+1$ and $s_4=s_5=s_6=-i+1$. 

\subsection{The case (iii)}
Let $i_1=i, i_2=\cdots=i_5=j$ and $i_6=k$. Thus, one has 
$i+j \geq k, k-j \geq 0, k-i-1 \geq 0, i-1 \geq 0$ and $i+k-2 \leq d-1$. 
Hence, we can define $\Pc_7(i+j-k,k-j,k-i-1,0,0,i-1)$ 
and its $\delta$-vector coincides with (iii) 
since $s_1=\frac{-4i+j-4k}{7}+1=-j+1, s_2=\frac{-i+2j-8k}{7}+1=-k+1, 
s_3=\frac{-5i+3j-5k}{7}+1=-j+1, s_4=\frac{-2i-3j-2k}{7}+1=-j+1, 
s_5=\frac{-6i-2j+k}{7}+1=-i+1$ and $s_6=\frac{-3i-j-3k}{7}+1=-j+1$. 

\subsection{The case (iv)}
Let $i_1=i_2=i, i_3=i_4=j$ and $i_5=i_6=k$. Thus, one has 
$i-1 \geq 0, i+j \geq k, 3k-3j-1 \geq 0$ and $2i-2j+2k-2=i+k-2 \leq d-1$. 
Hence, we can define $\Pc_7(0,0,i-1,i+j-k,0,3k-3j-1)$ 
and its $\delta$-vector coincides with (iv) 
since $s_1=s_2=-i+2j-2k+1=-k+1, s_3=s_4=-i+j-k+1=-j+1$ and $s_5=s_6=-i+1$.

\subsection{The case (v)}
Let $i_1=k_1, i_2=i_3=k_2, i_4=i_5=k_3$ and $i_6=k_4$. Thus, one has 
$2k_1 \geq k_2, k_2-k_1 \geq 0, k_1+k_2 \geq k_3, 2k_3-2k_1-2 \geq 0$ 
and $k_2+k_3-2 \leq d-1$. 
Hence, we can define $\Pc_7(0,2k_1-k_2,0,k_2-k_1,k_1+k_2-k_3,2k_3-2k_1-2)$ 
and its $\delta$-vector coincides with (v) 
since $s_1=k_1-k_2-k_3+1=-k_4+1, s_2=s_3=-k_3+1, s_4=s_5=-k_2+1$ and $s_6=-k_1+1$. 

\subsection{The case (vi)}
Let $i_1=i_2=k_1, i_3=k_2, i_4=k_3$ and $i_5=i_6=k_4$. Thus, one has 
$k_3-k_2-1 \geq 0, k_1+k_2 \geq k_3, 2k_1 \geq k_3, k_2+2k_3-3k_1-1 \geq 0$ 
and $k_2+k_3-2 \leq d-1$. 
Hence, we can define $\Pc_7(0,k_3-k_2-1,k_1+k_2-k_3,2k_1-k_3,0,k_2+2k_3-3k_1-1)$ 
and its $\delta$-vector coincides with (vi) 
since $s_1=s_2=k_1-k_2-k_3+1=-k_4+1, s_3=-k_3+1, s_4=-k_2+1$ and $s_5=s_6=-k_1+1$. 

\subsection{The case (vii)}
Let $i_1=k_1, i_2=k_2, i_3=i_4=k_3, i_5=k_4$ and $i_6=k_5$. Thus, one has 
$2k_1 \geq k_2, k_1+k_2 \geq k_3, k_2 - k_1 \geq 0, 3k_3-2k_1-k_2-2 \geq 0$ and 
$2k_3-2 \leq d-1$. 
Hence, we can define $\Pc_7(0,0,2k_1-k_2,k_1+k_2-k_3,k_2-k_1,3k_3-2k_1-k_2-2)$ 
and its $\delta$-vector coincides with (vii) 
since $s_1=k_1-2k_3+1=-k_5+1, s_2=k_2-2k_3+1=-k_4+1, 
s_3=s_4=-k_3+1, s_5=-k_2+1$ and $s_1=-k_1+1$. 

\subsection{The case (viii)}
In this case, one has $i_1+i_2 \geq i_3, 2i_2 \geq i_4, i_3+2i_4-2i_1-i_2-2 \geq 0, 
2i_1 \geq i_2, i_1+i_3 \geq i_4$ and $i_3+i_4-2 \leq d-1$. 
Hence, we can define 
$\Pc_7(0,i_1+i_2-i_3,i_1+i_3-2i_2,0,2i_2-i_4,i_3+2i_4-2i_1-i_2-2)$ 
if $i_1+i_3 \geq 2i_2$ and 
$\Pc_7(0,2i_1-i_2,0,2i_2-i_1-i_3,i_1+i_3-i_4,i_3+2i_4-2i_1-i_2-2)$ 
$i_1+i_3 \leq 2i_2$. 
Moreover, each of their $\delta$-vectors coincides with (viii) 
since $s_1=i_1-i_3-i_4+1=-i_6+1$, $s_2=i_2-i_3-i_4+1=-i_5+1$, 
$s_3=-i_4+1$, $s_4=-i_3+1$, $s_5=-i_2+1$ and $s_6=-i_1+1$. 

\begin{Remark}\label{rem7}{\em 
When we discuss the cases (vi) and (viii), we need the new inequality $2i_2 \geq i_4$. 
In fact, for example, the sequence $(1,0,2,0,1,1,0,2,0)$ 
cannot be the $\delta$-vector of an integral simplex, 
although this satisfies $i_1+i_l \geq i_{l+1}, l=1,2,3$. 
Similarly, the sequence $(1,0,1,1,0,1,0,1,0,1,1,0)$ is also impossible 
to be the $\delta$-vector of an integral simplex, 
although this satisfies $i_1+i_l \geq i_{l+1}, l=1,2,3$. 
}\end{Remark}

More generally, the following example shows that 
many inequalities $i_k+i_{\ell} \geq i_{k+\ell}$ are required to verify 
whether a given integer sequence is a $\delta$-vector of some integral simplex.

\begin{Example}\label{77}{\em 
For a prime number $p$ with $p \geq 7$, 
let $k$ and $\ell$ be positive integers satisfying the condition described in \eqref{reduce}. 
Let us consider the integer sequence 
$$(\delta_0,\delta_1,\ldots,\delta_d)=
(1,0,\ell,0,\underbrace{1,\ldots,1}_{p-2\ell-1},0,\ell,0) \in \ZZ^{d+1},$$ 
where $d=p-2\ell+5$. 
Then $i_1=\cdots=i_{\ell}=2$, $i_j=j-\ell+3$ for $j=\ell+1,\ldots,p-\ell-1$ 
and $i_{p-\ell}=\cdots=i_{p-1}=p-2\ell+4$, 
where $i_1,\ldots,i_{p-1}$ are the positive integers such that 
$\sum_{i=0}^d \delta_i t^i = 1+t^{i_1}+\cdots+t^{i_{p-1}}$ 
with $1 \leq i_1 \leq \cdots \leq i_{p-1} \leq d$. 
Thus, one has $i_k+i_{\ell}=4$ but $i_{k+\ell} = k+3$ or $i_{k+\ell}=p-2\ell+4$. 
In fact, since 
$$p-\ell - (k+\ell) \geq p-k-2\lfloor(p-k)/2\rfloor \geq p-k-p+k =0,$$ 
we have $\ell+1 \leq k+ \ell \leq p-\ell-1$ or $k+\ell=p-\ell$. 
Hence, this integer sequence satisfies none of the inequalities 
$i_k+i_{\ell} \geq i_{k+\ell}$ when $k$ and $\ell$ satisfy 
$k \geq 2$ and the condition in \eqref{reduce}. 
On the other hand, this satisfies both 
$i_j+i_{p-j}=d+1$ for $1 \leq j \leq p-1$ and 
$i_j+i_{p-j-1} \geq i_{p-1}$ for $1 \leq j \leq p-2$. 

Remark that since $\delta_1=0$, if there exists an integral convex polytope of dimension $d$ 
whose $\delta$-vector equals this sequence, then it must be a simplex. 
Therefore, thanks to Theorem \ref{main} (b), 
we can claim that there exists no integral convex polytope whose $\delta$-vector equals this sequence, 
while we cannot determine whether this integer sequence is a $\delta$-vector of 
some integral convex polytope only by \eqref{Stanley} and \eqref{Hibi}. 
}\end{Example}

\smallskip

\section{Towads the classification of Ehrhart polynomials 
with general normalized volumes}\label{sec:towards}

Finally, we note some future problems on the classification of Ehrhart polynomials 
of integral convex polytopes.

\subsection{Higher prime case}

Remark that we cannot characterize 
the possible $\delta$-vectors of integral simplices 
with higher prime normalized volumes only by Theorem \ref{main}, 
that is, Theorem \ref{main} is not sufficient. 
In fact, since the volume of an integral convex polytope 
containing a unique integer point in its interior has an upper bound, 
if $p$ is a sufficiently large prime number, then the integer sequence $(1,1,p-3,1)$ 
cannot be a $\delta$-vector of any integral simplex of dimension 3, 
although $(1,1,p-3,1)$ satisfies all the conditions of Theorem \ref{main}.

\subsection{Non-prime case}
We also remark that Theorem \ref{main} is not true when $\sum_{i=0}^d \delta_i$ is not prime in general. 
In fact, for example, there exists an integral simplex of dimension 5 
whose $\delta$-vector is $(1,1,0,2,0,0)$ (\cite[Theorem 5.1]{HHNan}), 
while this satisfies neither $i_1+i_3=i_2+i_2$ nor $2i_1 \geq i_2$, where $i_1=1$ and $i_2=i_3=3$.

More generally, for a non-prime number integer $m=gq$, 
where $g>1$ is the least prime divisor of $m$, 
let $d=m+1$ and $\Pc=\Pc_m(d_1,\ldots,d_{m-1})$ with $d_g=d-1$. 
Then, from Lemma \ref{lemmaM}, we have $\delta(\Pc)=(\delta_0,\delta_1,\ldots,\delta_d)$, where 
\begin{eqnarray*}
\delta_i=
\begin{cases}
1 &i=0, \\
g-1 &i=1, \\
g &i=g+1,2g+1,\ldots,(q-1)g+1. 
\end{cases}
\end{eqnarray*}
This $\delta$-vector satisfies neither $i_1+i_{gq-1}=i_g+i_{(q-1)g}$ nor $i_1+i_{g-1} \geq i_g$. 

On the other hand, Proposition \ref{equiv} is true even for non-prime normalized volume case 
and we also know other analogue of Theorem \ref{main} for such case as follows. 
\begin{Proposition}
Let $\Pc$ be an integral simplex of dimension $d$ with its $\delta$-vector 
$\delta(\Pc)=(\delta_0,\delta_1,\ldots,\delta_d)$ and 
$i_1,\ldots,i_{m-1}$ the positive integers such that 
$\sum_{i=0}^d\delta_it^i=1+t^{i_1}+\cdots+t^{i_{m-1}}$ with $1 \leq i_1 \leq \cdots \leq i_{m-1} \leq d$, 
where $\sum_{i=0}^d\delta_i=m$ is not prime. Then one has 
$$i_k+i_{\ell} \geq i_{k+\ell} \;\text{ for }\; 1 \leq k \leq \ell \leq g-1 
\;\text{ with }\; k+\ell \leq g-1, $$ 
where $g$ is the least prime divisor of $m$. 
\end{Proposition}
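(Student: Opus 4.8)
The plan is to mimic the proof of Theorem \ref{main}(b), replacing the cyclic group $\ZZ/p\ZZ$ (which we no longer have) by a suitable large cyclic subgroup of the abelian group $\bx(\Pc)$. Concretely, I would first recall that $\bx(\Pc)$ is a finite abelian group of order $m = \sum_{i=0}^d \delta_i$, and that for each element $g = \sum_q r_q(v_q,1)$ the degree $\deg(g) = \sum_q r_q$ behaves subadditively under $\oplus$: $\deg(g \oplus h) = \sum_q \{r_q + r'_q\} \le \sum_q (r_q + r'_q) = \deg(g) + \deg(h)$. This is exactly the inequality used in part (b) of Theorem \ref{main}, and it holds verbatim here with no primality assumption.

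Next I would invoke the Cauchy--Davenport theorem, but in the form valid for an arbitrary finite abelian group: if $H$ is a finite abelian group whose smallest nontrivial subgroup has order $g$ (equivalently, whose smallest prime factor of $|H|$ is $g$, by Cauchy's theorem the relevant bound involves $g$), then for nonempty $A, B \subseteq H$ one has $|A \oplus B| \ge \min\{g, |A| + |B| - 1\}$ — this is the Kneser/Cauchy--Davenport bound, and the relevant constant is precisely the least prime divisor $g$ of $m = |H|$, since that is the minimal size of a nontrivial subgroup. With this in hand, I would fix integers $k \le \ell$ with $k + \ell \le g-1$, choose distinct nonzero elements $g_1, \ldots, g_\ell \in \bx(\Pc)$ with $\deg(g_j) = i_j$, and set $A = \{{\bf 0}, g_1, \ldots, g_\ell\}$, $B = \{{\bf 0}, g_1, \ldots, g_k\}$. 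Since $|A| + |B| - 1 = k + \ell + 1 \le g$, the bound gives $|A \oplus B| \ge k + \ell + 1$, so $A \oplus B$ contains at least $k+\ell$ nonzero elements. Each such element $g_j \oplus g_{j'}$ with $j' \le k \le \ell$ and $j \le \ell$ has degree at most $i_j + i_{j'} \le i_k + i_\ell$ by the subadditivity above; hence by the definition of the $i_t$ (they are the degrees of the nonzero elements listed in increasing order), the $(k+\ell)$-th smallest degree $i_{k+\ell}$ is at most $i_k + i_\ell$.

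The main obstacle — really the only subtle point — is justifying the group-theoretic inequality $|A \oplus B| \ge \min\{g, |A|+|B|-1\}$ for a general finite abelian group with least prime divisor $g$; the classical Cauchy--Davenport theorem cited as \cite[Theorem 2.2]{Nath} is stated for $\ZZ/p\ZZ$. I would resolve this either by quoting the Kneser theorem form, or, more elementarily, by observing that $\bx(\Pc)$ contains a subgroup $C \cong \ZZ/g\ZZ$ (Cauchy's theorem) and that it suffices to work with sumsets inside a single coset argument: when $|A| + |B| - 1 \le g$, one can reduce to the Cauchy--Davenport situation by choosing the $g_j$ to generate a cyclic subgroup, or simply note that any sumset in an abelian group with no subgroup of order between $2$ and $g-1$ satisfies the stated bound. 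Once that lemma is available, the argument is word-for-word the proof of Theorem \ref{main}(b) with $p$ replaced by $g$, and the equalities from part (a) play no role here, so nothing else is needed.
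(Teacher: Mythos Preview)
Your approach is essentially the same as the paper's: the paper's proof consists of the single sentence ``By applying \cite[Theorem 13]{Karo}, a proof of this statement is the same as the proof of Theorem \ref{main} (b),'' and you have correctly reconstructed that the only new ingredient needed is a Cauchy--Davenport bound $|A \oplus B| \ge \min\{g,\,|A|+|B|-1\}$ valid in an arbitrary finite abelian group with least prime divisor $g$, after which the argument of Theorem \ref{main}(b) goes through verbatim with $p$ replaced by $g$. One small caution: your ``more elementary'' alternative of choosing the $g_j$ to lie in a cyclic subgroup of order $g$ does not work as stated, since the $g_j$ are forced on you by the requirement $\deg(g_j)=i_j$ and need not lie in any proper subgroup; the Kneser-type bound (or K\'arolyi's theorem, which is what the paper cites) is really needed.
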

\begin{proof}
By applying \cite[Theorem 13]{Karo}, 
a proof of this statement is by with the same as the proof of Theorem \ref{main} (b). 
\end{proof}
In fact, it is immediate that the above example satisfies 
$i_k+i_{\ell} \geq i_{k+\ell}$ for $1 \leq k \leq \ell \leq g-1$ with $k+\ell \leq g-1$.

\end{document}